\newcommand{\notes}[1]{}
\newcommand{\vekk}[1]{}
\newcommand{\delete}[1]{}
\newcommand{\gtcomment}[1]{}
\newtheorem{theo}{Theorem}%Slett ikke sikkert at....
\newtheorem{Def}{Definition}
\newtheorem{lemma}{Lemma} %Slett ikke sikkert at....
\newcommand{\ceps}{{\cal E}} % a sigma field
\newcommand{\pr}{\operatorname{\text{P}}} % Probability
\newcommand{\Cov}{\operatorname{Cov}} % Probability
\newcommand{\abs}[1]{\left| #1 \right|} 
\newcommand{\st}{\operatornamewithlimits{{\mid}}}
\newcommand{\Normal}{\mathop{\bf Normal}} % Normal, or Gaussian
\newcommand{\gamvar}{\mathop{\bf Gamma}}
\newcommand{\invgamvar}{\mathop{\bf InvGamma}}
\newcommand{\univar}{\mathop{\bf U}}
\newcommand{\E}{\mathord{\mbox{$E$}}} % Expectation
\newcommand{\nc}{\newcommand}
\nc{\beqns}{\begin{eqnarray*}}
\nc{\eeqns}{\end{eqnarray*}}
\nc{\beqn}{\begin{eqnarray}}
\nc{\eeqn}{\end{eqnarray}}
\nc{\beq}{\begin{equation}}
\nc{\eeq}{\end{equation}}
\newcommand{\bes}{ \begin{equation} \begin{split} } %Nummerert slutt.
\newcommand{\ees}{ \end{split} \end{equation} } %Nummerert slutt.
\newcommand{\into}{\mbox{$\: \rightarrow \:$}}
\newcommand{\RealN}{{\mbox{$\Bbb R$}}} % Real numbers
\begin{document}

\newcommand{\change}[1]{{\color{red} #1}}
%\newcommand{\change}[1]{{#1}}

%\newcommand{\vecpsi}

% For
%\input{extra.aux}

\vekk{
% Comm stat
\setlength{\oddsidemargin}{0in}
\setlength{\evensidemargin}{0in}
\setlength{\topmargin}{-.5in}
\setlength{\headsep}{0in}
\setlength{\textwidth}{6.5in}
\setlength{\textheight}{8.5in}
\def\refhg{\hangindent=20pt\hangafter=1}
\def\refmark{\par\vskip 2mm\noindent\refhg}
\def\refhg{\hangindent=20pt\hangafter=1}
\def\refmark{\par\vskip 2mm\noindent\refhg}
\def\refhg{\hangindent=20pt\hangafter=1}    %20pt
\def\refhgb{\hangindent=10pt\hangafter=1}
\def\refmark{\par\vskip 2mm\noindent\refhg}
\renewcommand{\baselinestretch}{1.5}
}

%\title[Conditional sampling]{Conditional sampling and fiducial
%posteriors}
%\title{The Link between Fiducial and Posterior Sampling}
\title{Fiducial and Posterior Sampling}
% \title{On conditional, fiducial, and posterior sampling}

\author{Gunnar Taraldsen and Bo H. Lindqvist (2015)\\
  Communications in Statistics - Theory and Methods,\\
  44: 3754-3767,
  \doi{10.1080/03610926.2013.823207}}
  %  DOI:10.1080/03610926.2014.935430}

\date{}

\maketitle

%\newpage

\vekk{
\vfill

\begin{center}
\textbf{Abstract}
\end{center}
}

\begin{abstract}
The fiducial coincides with the posterior in a group model equipped with the right Haar prior. This result is here generalized. For this the underlying probability space of Kolmogorov is replaced by a $\sigma$-finite measure space and fiducial theory is presented within this frame. Examples are presented that demonstrate that this also gives good alternatives to existing Bayesian sampling methods. It is proved that the results provided here for fiducial models imply that the theory of invariant measures for groups cannot be generalized directly to loops: There exist a smooth one-dimensional loop where an invariant measure does not exist.
\end{abstract}

% It is well known that the fiducial distribution coincides with 
% the posterior in a group model equipped with the right Haar prior. 
% This result is here generalized within a theoretical frame that
% includes improper laws.
% For this the  underlying probability space of Kolmogorov is replaced by 
% a $\sigma$-finite measure space.
% The concept of a fiducial model and fiducial inference is
% also presented within this frame.
% It is proved more generally that a fiducial model can be used
% as a tool to obtain a Bayesian posterior sampling method.
% Examples 
% - multivariate normal, gamma, and others - 
% are presented that demonstrate that this
% can be a good alternative to existing Bayesian methods.
% A loop is an algebraic structure similar to a group,
% but the associative law is not assumed to hold.
% It is proved that the results provided here 
% for fiducial models imply that the theory of
% invariant measures for groups can not be generalized directly to loops:
% There exist a smooth one-dimensional loop where an invariant measure 
% does not exist.   

% conditional probability space as defined by Renyi. 
% The method of proof also provides
% a method for conditional sampling that generalizes previous results of the authors
% on conditional sampling given a sufficient statistic. 
% Both results provide tools 
% for the construction of novel inference methods.
% A method for conditional sampling.
%The gamma distribution is well known to be most useful in many 
%applied problems.
\vspace*{.3in}

% 3-6 keywords
\noindent\textsc{Keywords}: {
Conditional sampling,
% Stochastic simulation,
% Sampling on a manifold,
Improper prior,
% Posterior propriety,
% Fiducial,
Haar prior,
Sufficient statistic,
Quasi-group
}

\newpage

\tableofcontents

%\vfill

\newpage

%\doublespacing

\section{Introduction}

A Bayesian posterior is said to be a
fiducial posterior if it coincides with a fiducial distribution.
The question of existence of a Bayesian prior such that the
resulting posterior is a fiducial posterior has attracted 
interest since the introduction of the fiducial 
argument by \citet{Fisher30,Fisher35Fiducial}.
Cases where the fiducial is not a Bayesian posterior are interesting
because the fiducial theory then brings truly new armory
for the construction of new inference procedures.
The cases where there is a fiducial posterior are interesting because
the corresponding fiducial algorithm can be simpler to 
implement than the competitors based on the Bayesian theory.

The best result in the one-dimensional case was found by
\citet{Lindley58FiducialBayesEq}. 
He proved that, given appropriate smoothness conditions, 
a fiducial posterior exists if and only if the
problem can be transformed by one-one transformations of
the parameter and sample space into the standard location problem.
The best result obtained so far in the multivariate case was found by
\citet{Fraser61FiducialInvariance,Fraser61Fiducial}.
For a group model where a right Haar measure exists,
the fiducial coincides with the posterior 
from the right Haar measure as a Bayesian prior.
The main result in this paper is Theorem~\ref{TheoFidPost}
% on page~\pageref{TheoFidPost}
that contains both results as special cases.

\section{Fiducial posteriors}
\label{sFidPost}

The arguments in the following 
make it necessary to include improper priors in the
considerations, and this will here be done based 
on the theory presented by \citet{TaraldsenLindqvist10ImproperPriors}.
A brief summary of the necessary ingredients from this
theory is given next.

\begin{Def}[The basic space]
\label{defOmega}
The basic space $\Omega$ is equipped with a 
$\sigma$-finite measure $\pr$ defined on the
$\sigma$-field $\ceps$ of events.
\end{Def}

All definitions and results in the following will implicitly or explicitly 
rely on the existence of the underlying basic space.
This is as in the theory of probability presented by
\citet{KOLMOGOROV}, 
but the requirement $\pr (\Omega) = 1$ is here replaced by the
weaker requirement that $\pr$ is $\sigma$-finite:
There exist events $A_1, A_2, \ldots$ with 
$\Omega = \cup_i A_i$ and $\pr (A_i) < \infty$.
The above can be summarized by saying that it is assumed throughout
in this paper that
the basic space $\Omega$ is a $\sigma$-finite measure space 
$(\Omega, \ceps, \pr)$.

\begin{Def}[Random element]
\label{defZ}
A random element in a measurable
space $(\Omega_Z, \ceps_Z)$ is given by
a measurable function
$Z: \Omega \into \Omega_Z$.
The law $\pr_Z$ of $Z$ is defined by
\begin{equation}
\pr_Z (A) = \pr \circ Z^{-1} (A) = \pr (Z \in A) = \pr \{\omega \st Z (\omega) \in A\}.
\end{equation}
The random element is $\sigma$-finite if the law is $\sigma$-finite.
\end{Def}
Definition~\ref{defZ} corresponds to the
definition $\pr_X = \pr \circ X^{-1}$ by \citet[p.4,eq.2]{LAMPERTI} and
the definition $\mu_X = \mu \circ X^{-1}$ by
\citet[p.607]{SCHERVISH}.
It also corresponds to the original definition
given by \citet[p.21, eq.1]{KOLMOGOROV},
but he used superscript notation instead of the above subscript notation. 
The law $\pr_Z$ will also be referred to as the distribution of $Z$.
The term {\em random quantity} is used by \citet{SCHERVISH} and 
can be used as an alternative to
the term {\em random element} used above and by \citet{Frechet48ranElement}.
The term {\em random variable} $X$ is reserved
for the case of a random real number.
This is given by a measurable 
$X: \Omega \into \Omega_X = \RealN$,
where $\ceps_X$ is the $\sigma$-field generated by the
open intervals.
% The choice of a subscript notation is consistent with
% the subscript notation 
% $F_X (x) = \pr (X \le x) = \pr \{\omega \st X
% (\omega) \le x\}$ used for the
% definition of the cumulative distribution function

Definition~\ref{defZ} of a random element is more general than
any of the above given references since 
$(\Omega, \ceps, \pr)$ is not required to be a probability space,
but it is assumed to be a $\sigma$-finite measure space.
The space $\Omega$ comes, however, equipped with a large family of 
conditional distributions that are true probability distributions.
This is exactly what is needed for the formulation of 
a statistical inference problem,
and will be explained next.

Let $X$ and $Y$ be random elements,
and assume that $Y$ is $\sigma$-finite.
Existence of the conditional expectation
$\E (\phi (X) \st Y=y) = \E_X^y (\phi) = \E_X (\phi \st Y=y)$ and
the factorization
\begin{equation}
\pr_{X,Y} (dx,dy) = \pr_X^y (dx) \pr_Y (dy)
\end{equation}
can then be established.
The proof follows from the Radon-Nikodym theorem
exactly as in the case where the underlying space
is a probability space 
\citep{TaraldsenLindqvist10ImproperPriors}.
The case $X(\omega)=\omega$ gives in particular
$\{(\Omega, \ceps, \pr^y) \st y \in \Omega_Y\}$ as a family
of probability spaces.
This last claim is not strictly true,
but given appropriate regularity conditions there will exist
a regular conditional law as claimed \citep[p.618]{SCHERVISH}.
 
A {\em statistical model} is usually defined to be a 
family $\{(\Omega_X, \ceps_X, \pr_X^\theta) \st \theta \in \Omega_\Theta \}$
of probability spaces.
% indexed by the model parameter space $\Omega_\Theta$.
This definition is also used here,
but with an added assumption included in the definition:
It is assumed that there exist a random element $X$,
and a $\sigma$-finite random element $\Theta$
so that $\pr_X^\theta (A) = \pr_X (A \st \Theta = \theta)$.
It is in particular assumed that
both the sample space $(\Omega_X, \ceps_X)$ and the model parameter space
$(\Omega_\Theta, \ceps_\Theta)$ are measurable spaces.
The law $\pr_\Theta$ is not assumed to be known and
is not specified.
Similarly, the functions $X: \Omega \into \Omega_X$ and
$\Theta: \Omega \into \Omega_\Theta$ are assumed to exist,
but they are also not specified.
This is by necessity since the
underlying space $\Omega$ is not specified.
It is an abstract underlying space that makes it possible to formulate
a consistent theory.

A {\em Bayesian model} is given by a statistical model and the
additional specification of the law $\pr_\Theta$ of $\Theta$.
This prior law $\pr_\Theta$ can be improper in the
theory as just described,
and discussed in more detail by
\citet{TaraldsenLindqvist10ImproperPriors}.
The posterior law $\pr_\Theta^x$ is well defined if $X$ is $\sigma$-finite.
The result of Bayesian inference is given by the posterior law,
and Bayesian inference is hence trivial except for the
practical difficulties involved in the calculation of the posterior
and derived statistics.
The most difficult part from a theoretical perspective 
is to justify the choice of statistical model
and the prior in concrete modeling cases. 

Fiducial arguments were invented by \citet{Fisher30,Fisher35Fiducial}
to tackle cases without a prior law,
but with the aim to obtain a result similar to the posterior distribution.
The resulting distribution from the fiducial argument is called a
fiducial distribution.
The following definition \citep{TaraldsenLindqvist13fidopt} 
will be used here. 
It should be noted that the definition uses concepts that rely
on existence of the underlying basic space $\Omega$.
\begin{Def}[Fiducial model and distribution]
\label{defFidMod}
Let $\Theta$ be a $\sigma$-finite random element
in the model parameter space $\Omega_\Theta$.
A fiducial model $(U, \zeta)$ is defined
by a random element $U$ in the Monte Carlo space $\Omega_U$ 
and a measurable function 
$\zeta: \Omega_U \times \Omega_\Theta \into \Omega_Z$
where $\Omega_Z$ is the sample space.
The model is conventional if the conditional law
$\pr_U^\theta$ does not depend on $\theta$.
The model is simple if the fiducial equation
$\zeta (u,\theta) = z$ has a unique solution
$\theta^z (u)$ for all $u,z$.
If the model is both conventional and simple,
then the fiducial distribution 
corresponding to an observation $z \in \Omega_Z$
is the distribution of 
$\Theta^z = \theta^z (U)$ where 
$U \sim \pr_U^\theta$.
\end{Def}
A fiducial model $(U, \zeta)$ is
a fiducial model for the 
statistical model $\{\pr_Z^\theta \st \theta \in \Omega_\Theta\}$ if
\begin{equation}
(\zeta (U,\Theta) \st \Theta = \theta) \sim (Z \st \Theta = \theta)
\end{equation}
The fiducial model gives a method for simulation 
from the statistical model:
If $u$ is a sample from the known Monte Carlo law $\pr_U^\theta$,
then $z = \zeta (u,\theta)$ is a sample from $\pr_Z^\theta$.
Sampling from the fiducial follows likewise,
but involves solving the fiducial equation $\tau (u,\theta)=t$
to obtain the sample $\theta = \theta^t (u)$.
This, and related definitions in the literature, 
are discussed in more detail by \citet{TaraldsenLindqvist13fidopt}.

We have now presented the necessary ingredients for the formulation
of the main theoretical results here.
The first result gives conditions that ensure that
the fiducial coincides with a Bayesian posterior. 
\begin{theo}
\label{TheoFidPost}
Assume that $(U,\tau)$ is a conventional simple fiducial
model for the statistical model 
$\{\pr_T^\theta \st \theta \in \Omega_\Theta\}$.
If the Bayesian prior $\pr_\Theta$ implies that distribution of
$\tau (u, \Theta)$ does not depend on $u$,
then the Bayesian posterior distribution 
$\pr_\Theta^t$
is well defined and identical with the fiducial distribution of
$\Theta^t$.
\end{theo}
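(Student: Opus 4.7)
The plan is to reduce everything to a change-of-variables on the joint $(U,\Theta)$ space and then read the posterior off an independence structure. For the setup, since $(U,\tau)$ is conventional, the conditional law $\pr_U^\theta$ is a single probability law $\pr_U$ on $\Omega_U$ that does not depend on $\theta$. With the prior $\pr_\Theta$, the joint law of $(U,\Theta)$ is the $\sigma$-finite product $\pr_U \otimes \pr_\Theta$. The observable is realized as $T = \tau(U,\Theta)$; by the fiducial property $(\tau(U,\Theta)\st\Theta=\theta)\sim(T\st\Theta=\theta)$, so the Bayesian joint law of $(\Theta,T)$ agrees with $\pr_\Theta(d\theta)\,\pr_T^\theta(dt)$ referred to in the statement.

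The core computation shows that $U$ and $T$ are independent in this Bayesian joint law. For any bounded measurable $f$, Fubini gives
\[
\E[f(U,T)] = \int \pr_U(du)\int f(u,\tau(u,\theta))\,\pr_\Theta(d\theta) = \int \pr_U(du)\int f(u,t)\,\pr_{T,u}(dt),
\]
where $\pr_{T,u}$ denotes the law of $\tau(u,\Theta)$ under $\pr_\Theta$. The hypothesis of the theorem is precisely that $\pr_{T,u}$ is independent of $u$, so calling this common law $\pr_T$ one obtains $(U,T)\sim \pr_U\otimes \pr_T$, i.e., $U$ and $T$ are independent with $T$-marginal $\pr_T$.

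The posterior then falls out. By simplicity, $\tau(U,\Theta)=T$ has the unique solution $\Theta=\theta^T(U)$, so for any bounded measurable $g$,
\[
\E[g(\Theta)\st T=t] = \E[g(\theta^t(U))\st T=t] = \E[g(\theta^t(U))],
\]
the last equality using independence of $U$ and $T$. Hence $\pr_\Theta^t$ is the law of $\theta^t(U)$ with $U\sim \pr_U$, which is precisely the fiducial distribution of $\Theta^t$ from Definition~\ref{defFidMod}.

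The main obstacle I expect is the technical but essential point that the posterior $\pr_\Theta^t$ be well defined at all: in the $\sigma$-finite framework of Section~\ref{sFidPost}, conditioning on $T$ requires $T$ to be $\sigma$-finite. I would establish this using simplicity. Choose measurable $B_i\subset\Omega_\Theta$ with $\pr_\Theta(B_i)<\infty$ and $\cup_i B_i=\Omega_\Theta$; fix any $u$ and put $A_i=\{t:\theta^t(u)\in B_i\}$. Injectivity of $\tau(u,\cdot)$ gives $\{\theta:\tau(u,\theta)\in A_i\}=B_i$, so $\pr_T(A_i)=\pr_\Theta(B_i)<\infty$; surjectivity of $\tau(u,\cdot)$ gives $\cup_i A_i=\Omega_T$. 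This secures the factorization $\pr_{\Theta,T}=\pr_T(dt)\,\pr_\Theta^t(d\theta)$ via the Radon--Nikodym argument recalled in the paper, after which the identification above completes the proof.
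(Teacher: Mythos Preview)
Your proof is correct and follows essentially the same route as the paper: a Fubini/change-of-variables computation shows that $(U,T)\sim \pr_U\otimes\pr_T$, $\sigma$-finiteness of $T$ is obtained from the bijectivity of $\theta\mapsto\tau(u,\theta)$, and the posterior is then read off as the law of $\theta^t(U)$. The only cosmetic difference is that the paper packages the joint-law computation into a separate Lemma~\ref{Lemma1} (phrased with densities so as to cover Theorem~\ref{TheoPost} as well), whereas you carry it out directly at the level of measures.
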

It should in particular be observed that the
required $\sigma$-finiteness of $T = \tau (U,\Theta)$
is a part of the conclusion in the previous theorem.
This ensures that the Bayesian posterior exists. 

The next result gives a recipe for posterior sampling
based on a fiducial model.
\begin{theo}
\label{TheoPost}
Assume that $(U,\tau)$ is a conventional fiducial
model and that $T = \tau (U,\Theta)$ is $\sigma$-finite
for a given prior $\pr_\Theta$.
Assume furthermore that
$\tau (u, \Theta) \sim w (t,u) \mu (dt)$   
for some $\sigma$-finite measure $\mu$ and
jointly measurable $w$.
If $u$ is a sample from a probability distribution
proportional to $w (t,u) \pr_U^\theta (du)$
and $\theta$ is a sample from the conditional law
$(\Theta \st \tau (u, \Theta) = t)$,
then $\theta$ is a sample
from the Bayesian posterior 
distribution of $\Theta$ given $T = t$.
\end{theo}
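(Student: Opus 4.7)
The plan is to compute the joint conditional law of $(U,\Theta)$ given $T = t$ and show that it factorizes exactly as the two-step recipe in the statement. By the conventional assumption the law $\pr_U^\theta$ does not depend on $\theta$, so I write $\pr_U(du) := \pr_U^\theta(du)$ and the joint on the basic space is
\begin{equation}
\pr_{U,\Theta}(du,d\theta) = \pr_U(du)\,\pr_\Theta(d\theta).
\end{equation}
Adjoining the deterministic variable $T = \tau(U,\Theta)$ gives a triple $(T,U,\Theta)$ on the basic space, whose joint $\sigma$-finite law I factorize in two complementary ways.

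I use the factorization
\begin{equation}
\pr_{T,U,\Theta}(dt,du,d\theta) = \pr_\Theta^{t,u}(d\theta)\,\pr_{T,U}(dt,du).
\end{equation}
The hypothesis $\tau(u,\Theta) \sim w(t,u)\mu(dt)$ is exactly the statement $\pr_T^u(dt) = w(t,u)\mu(dt)$, whence
\begin{equation}
\pr_{T,U}(dt,du) = w(t,u)\,\mu(dt)\,\pr_U(du).
\end{equation}
Integrating $u$ out yields the marginal $\pr_T(dt) = f(t)\,\mu(dt)$ with $f(t) = \int w(t,u)\pr_U(du)$; the hypothesized $\sigma$-finiteness of $T$ legitimizes conditioning on $T = t$ via the Radon-Nikodym construction recalled in Section~\ref{sFidPost}. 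Dividing,
\begin{equation}
\pr_{U,\Theta}^t(du,d\theta) = \frac{w(t,u)\,\pr_U(du)}{f(t)}\,\pr_\Theta^{t,u}(d\theta),
\end{equation}
which is precisely the claimed two-step sampling scheme: draw $u$ from the probability law proportional to $w(t,u)\pr_U(du)$, then draw $\theta$ from $\pr_\Theta^{t,u}$. The marginal of $\theta$ is then $\pr_\Theta^t$, the Bayesian posterior.

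The one delicate point is the identification of $\pr_\Theta^{t,u}$, the conditional of $\Theta$ given both $T = t$ and $U = u$, with the law $(\Theta \st \tau(u,\Theta) = t)$ stated in the theorem, where $u$ is treated as a frozen parameter and $\Theta$ has its prior law. This is a substitution principle: because $T = \tau(U,\Theta)$, conditioning on $U = u$ turns the event $\{T = t\}$ into $\{\tau(u,\Theta) = t\}$, and the conventional assumption that $\pr_U^\theta$ is free of $\theta$ allows the marginalization over $U$ to be replaced by fixing $u$. I expect this substitution step, together with verifying that the normalizer $f(t)$ is finite for $\pr_T$-almost every $t$ (which follows from the $\sigma$-finiteness assumption on $T$), to be the main technical obstacle; the algebraic factorization itself is routine in the improper-prior framework of \citet{TaraldsenLindqvist10ImproperPriors}.
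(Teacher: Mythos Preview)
Your proposal is correct and follows essentially the same route as the paper. The paper isolates the computation of the joint law $\pr_{U,T}(du,dt)=f(u)\,w(t,u)\,\nu(du)\,\mu(dt)$ as a separate Lemma (proved by Fubini plus the change of variables $(u,\theta)\mapsto(u,\tau(u,\theta))$), whereas you obtain it directly from the independence $\pr_{U,\Theta}=\pr_U\otimes\pr_\Theta$ and the factorization $\pr_{T,U}=\pr_T^u\,\pr_U$; the subsequent two-step sampling argument and the identification of $\pr_\Theta^{t,u}$ with $(\Theta\mid\tau(u,\Theta)=t)$ are handled the same way in both.
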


The proofs of Theorem~\ref{TheoFidPost} and
Theorem~\ref{TheoPost}
are postponed until 
Section~\ref{sLemma}.
We choose to discuss examples and consequences of these results next.

%\subsection{The location problem}
\section{The location problem}
\label{sLocation}

Assume that $t$ is the observed realization
of a random variable where
\begin{equation}
\label{eqLocFid}
t = \tau(u,\theta) = u + \theta
\end{equation}
where $u$ is a sample from the conditional law $\pr_U^\theta$.
It is assumed that  $\pr_U^\theta$
is known and does not depend on $\theta$.
The pair $(U, \tau)$ is then a fiducial
model for the statistical model
$\pr_T^\theta$.
The problem is to make statistical inference
regarding the model parameter 
$\theta \in \Omega_\Theta = \RealN$ based on
the model and the observation $t \in \Omega_T = \RealN$.

Consider first fiducial inference.
The fiducial distribution is determined
by the solution $\theta^t (u) = t - u$
of the fiducial equation $t = u + \theta$.
Monte Carlo sampling $u$ from the known law
$\pr_U^\theta$ gives corresponding samples
$t - u$ from the fiducial distribution.
The mean and standard deviation can then be calculated with
a precision depending on the choice of Monte Carlo size
and the random number generator. 
This can then be reported as an estimate of $\theta$
and a standard error respectively.
A more complete report can be given by a direct Monte Carlo
estimate of the fiducial distribution itself in the form of a graph.
This represents then the state of knowledge regarding
$\theta$ based on the observation and the fiducial model.

Consider next Bayesian inference.
Assume for simplicity that
$\pr_U^\theta (du) = f (u) \, du$,
where $du$ is Lebesgue measure on the real line.
If $\pr_\Theta (d\theta) = \pi (\theta) \, d\theta$
is the prior law,
then the posterior law is given by a density 
$\pi (\theta \st t) = C_t f (t - \theta) \pi (\theta)$
where $C_t$ is a normalization constant.
This normalization is generally possible if 
$T = \tau (U,\Theta)$ is $\sigma$-finite,
and this happens exactly when
$\int f (t - \theta) \pi (\theta)\,d\theta < \infty$ for (almost) all $t$ \citep{TaraldsenLindqvist10ImproperPriors}.
It is always possible if $\pi$ is a probability density,
but an alternative sufficient condition is that
$\pi$ is bounded.
The particular case $\pi (\theta) = 1$ gives the
result $\pi (\theta \st t) = f (t - \theta)$.
A simple calculus exercise shows directly
that this coincides with the fiducial law derived above.
The reporting of the result can be done as in the case of 
fiducial inference.

The previous result can also be inferred from Theorem~\ref{TheoFidPost}
since the law of $\tau (u, \Theta) = u + \Theta$ is the
Lebesgue measure when the law of $\Theta$ is the Lebesgue measure.
More generally the assumption $\pr_\Theta (d\theta) = \pi (\theta) \, d\theta$
gives $\tau (u, \Theta) \sim \pi (t-u) \, dt$.
Theorem~\ref{TheoPost} and the assumption of
$\sigma$-finiteness of $T$ 
can then be used for Bayesian
sampling more generally as follows:
Sample $u$ from a law proportional to the
measure $\pi (t - u) \, \pr_U^\theta (du)$ and
return $\theta = t - u$.
The latter proof does not rely on the existence of a density
$f$ for $\pr_U^\theta$ with respect to Lebesgue measure.
This argument is used in Section~\ref{ssSingular}
to provide a concrete example where a traditional
Bayesian sampling recipe fails,
but the fiducial algorithm from
Theorem~\ref{TheoPost} can be used.

Consider finally frequentist inference.
If $\pr_U^\theta (du) = f (u) \, du$
where $f$ has a unique maximum at $m_u$,
then $\hat{\theta} = t - m_u$ is the maximum likelihood estimator.
Assume that the expected value
$\E^\theta U = \mu_u$ exists.
It follows then that the expected value 
$\bar{\theta} = t - \mu_u$
of the fiducial distribution is
the shift equivariant estimator with smallest
mean square error \citep{TaraldsenLindqvist13fidopt}.
It is in particular better than the maximum likelihood estimator
when both exist,
it is unbiased, 
and the standard error is given by the standard deviation
of $U$.

The fiducial distribution is also a confidence distribution
since $U = T - \theta$ is a pivotal.
Consequently an expanded uncertainty can be found corresponding
to $95\%$ confidence intervals.
Symmetric, shortest, or uniformly most powerful limits can be
calculated.
The most powerful limits follow with reference to the likelihood ratio test
as exemplified for the exponential by \citet{Taraldsen11exprounded}.
This reference also gives the route for the inclusion
of the effect of finite resolution into the analysis.

The previous analysis with the assumption 
$\Omega_U = \Omega_T = \Omega_\Theta = \RealN$
can be generalized verbatim to the case
$\Omega_U = \Omega_T = \Omega_\Theta = V$
where $V$ is a finite dimensional real or complex vector space.
The property $\tau (u, \Theta) = u + \Theta \sim \Theta$
holds for the finite dimensional Lebesgue distribution for $\Theta$.
The further generalization to the case where $V$
is an infinite dimensional Hilbert space gives an example
where the Bayesian algorithm fails to produce 
optimal frequentist inference.
The fiducial argument given above holds also for the infinite
dimensional case,
and gives optimal inference as stated above
\citep{TaraldsenLindqvist13fidopt}.

The analysis can be generalized further to the case 
$\Omega_U = \Omega_T = \Omega_\Theta = V^n$.
This includes in particular the case of a random sample of size
$n$ from the original model given in
equation~(\ref{eqLocFid}),
but the independence assumption is not required in the following argument.
Equation~(\ref{eqLocFid}) must be replaced by
the equation $T_1 = U_1 + \theta$ corresponding to the
first component of the random element $T$ in $V^n$.
The law $\pr_U^\theta$ must be replaced by
the conditional law 
$(U_1 \st \Theta=\theta,U_2-U_1=t_2-t_1,\ldots,U_n-U_1=t_n-t_1)$.
Except for the practical difficulties 
related to this conditional law,
the analysis proceeds as before.
Optimal frequentist inference procedures 
including confidence distributions follow from the resulting
fiducial also in this case 
\citep{TaraldsenLindqvist13fidopt}.

\section{Location examples}
\label{sLocExamples}

% The aim next is to presents examples 
% to illustrate the general theory.
% The first examples are elementary, 
% and the resulting inference
% procedures are not novel.
% The novelty lies in the theoretical 
% formulation based on the underlying basic space $\Omega$. 

\subsection{A singular example}
\label{ssSingular}

The purpose of this example is to demonstrate that 
Theorem~\ref{TheoPost} can be used to calculate the
Bayesian posterior in certain cases where the 
traditional Bayesian recipe fails.

Let the Monte Carlo law $\pr_U^\theta$ give probability $p_i$ to the value $u_i$
for $i=1,2$. 
The model given by
$X = U + \theta$ with $\theta \in \Omega_\Theta = \RealN$
gives a law $\pr_X^\theta$ which is concentrated on
$\{u_1+\theta, u_2+\theta\} \subset \Omega_X = \RealN$.
The traditional Bayesian posterior would usually be calculated
by $\pi (\theta \st x) \propto f (x \st \theta) \pi (\theta)$,
but this fails here since the density $f (x \st \theta)$ fails to
exist for the case considered.

Consider next the algorithm given by Theorem~\ref{TheoPost}.
The relation 
$u + \Theta \sim \pi (x - u) \, dx$ gives the following
recipe:
Sample $u$ from a law that gives
relative probability $q_i = \pi (x - u_i) p_i$ to the values $u_1$ and $u_2$. 
The resulting
$\theta = x - u$ is a sample from the Bayesian
posterior $\pr_\Theta^x$.
The conclusion is that the posterior 
gives probability $q_i /(q_1 + q_2)$ to the two
values $\theta_i = x - u_i$ for $i=1,2$.

The uniform prior case 
$\pi = 1$ gives that the posterior equals the
fiducial which gives probability $p_i$ to $\theta_i$.

\subsection{Normal distribution}
\label{ssNormal1}

Assume that $X_i = \chi_i (U, \theta) = \theta + \sigma_0 U_i$
where the Monte Carlo law of $U$ corresponds to a random sample
of size $n$ from the standard normal distribution.
The $(U, \chi)$ is then a fiducial model for a random sample of
size $n$ from
a $\Normal (\theta, \sigma_0^2)$
where the variance $\sigma_0^2$ is assumed known.

This gives
$T = \overline{X} = \overline{\chi (U, \theta)} = \tau (V, \theta) =
\theta + \sigma V$ with
$\sigma = \sigma_0 / \sqrt{n}$ and
$V = \sqrt{n} \overline{U}$ has a standard normal distribution.
The $(V, \tau)$ is then a fiducial model for 
the sufficient statistic $T$ which is
a $\Normal (\theta, \sigma^2)$ statistical model.

The fiducial based on the sufficient statistic is the law of
$\Theta^t = t - \sigma V$ which is 
$\Normal (t, \sigma^2)$.
This gives the optimal equivariant estimator
$t=\overline{x}$,
the standard error $\sigma=\sigma_0/\sqrt{n}$,
and the expanded error $k \sigma$
where the coverage factor $k=1.96$ gives 
the level $95\%$.

The Bayesian conclusion with the uniform law as prior 
is given by the same numbers since the fiducial coincides with the
posterior in this case.

\subsection{Gamma distribution I}
\label{ssGammaI}

The example here is a generalization of the case given by a random sample
from the exponential distribution \citep{TaraldsenLindqvist13fidopt}.
Let 
$\chi_i (u, \theta) = \theta F^{-1} (u_i; \alpha)$,
where $F^{-1}$ is the inverse CDF of the gamma distribution 
with scale $\beta = 1$.
If $(U_1, \ldots, U_n \st \Theta=\theta) \sim \univar (0,1)$ independent, 
then the inversion method gives 
that $(U, \chi)$ is
a fiducial model for a random sample $X=(X_1, \ldots, X_n)$
from the gamma density:
\begin{equation}
f_{X_i} (x_i \st \theta) = 
\{\theta^\alpha \Gamma (\alpha)\}^{-1}
\; {x_i}^{\alpha - 1} e^{-x_i/\theta}
,\;\;
\text{shape } \alpha > 0, \text{scale } \theta > 0
\end{equation}  
It follows from this density that 
$T=\overline{X}$ is sufficient.
A fiducial model from the above fiducial model
is then
$T = \theta V$,
where the Monte Carlo variable
$V = \overline{F^{-1} (U; \alpha)}$
has a $\gamvar (n \alpha,1/n)$ distribution.

The fiducial equation 
$t=\theta v$ gives the fiducial 
$\Theta^t = t/V$ with an
$\invgamvar (n \alpha, n t)$ distribution.
This is a confidence distribution for $\theta$,
and also the Bayesian posterior corresponding
to a uniform prior for $\log \theta$.
The mean 
\begin{equation}
\overline{\theta} = t /(\alpha - 1/n)
\end{equation}  
is the best Bayesian estimator for the quadratic loss.
It can be seen as a sample size
adjustment of the likelihood estimate $\hat{\theta}=t/\alpha$.

The scale model transforms to the
location model
$\ln t = \ln (\theta) + \ln (v)$.
The best equivariant estimator for $\ln (\theta)$ is then
$\E^\theta (\ln \Theta^t)$,
and this integral equals $\ln (n t) - \psi (n \alpha)$ where
$\psi$ is the digamma function.

The best equivariant estimate for $\theta$ is
\begin{equation}
\label{eqBestBetaScale}
\tilde{\theta}=  (t/\alpha) \exp(\ln (n \alpha) - \psi (n \alpha))
\end{equation}  
This is best with respect to the squared distance 
$\abs{\ln (\theta_1) - \ln (\theta_2)}^2$ from
the Fisher metric as explained in more detail by 
\citet{TaraldsenLindqvist13fidopt}.

The reason for the choice of the above formulation of
equation~(\ref{eqBestBetaScale}) is that
$t$ is the uniformly minimum variance estimator of
$\alpha \beta$,
and the $\exp(\cdot)$ term can be seen as a correction of this.
The following
asymptotic and divergent series
$\psi (x) - \ln (x) \sim 1/(2x) - \sum_{n \ge 1} B_{2n} / (2 n x^{2n})$
for $x \rightarrow \infty$
shows in particular consistency of the estimator
in equation~(\ref{eqBestBetaScale}) with the more common estimator
$t/\alpha$ in the limit of infinite sample size $n \rightarrow \infty$.

The main reason for the inclusion of this example
is not the possibly novel result given by equation~(\ref{eqBestBetaScale}),
but rather demonstration purposes.
We consider the arguments as given above as a competitive alternative 
to the arguments given by a Bayesian calculation.
The Bayesian calculation is of course possible in this case,
but it seems more cumbersome to us.  
The claims on optimality can indeed also be proved directly
without any mention of a Bayesian prior \citep{TaraldsenLindqvist13fidopt}.

\section{One-dimensional fiducial inference}
\label{ssLindley}

Fiducial inference was first considered in the one-dimensional case.
This is discussed here, and the connection between the original 
definition and the more general Definition~\ref{defFidMod}
is in particular explained.

\subsection{Lindley's result}

\citet{Lindley58FiducialBayesEq} considered the one-dimensional case.
His sufficient and necessary condition for a fiducial posterior 
is equivalent with the conditions given in Theorem~\ref{TheoFidPost}.
The result is only valid by consideration of a more restrictive
definition of the fiducial distribution defined directly and uniquely
by an absolutely continuous cumulative distribution function.
This is explained next.

The monotonicity of a simple fiducial model has as a consequence
monotonicity in $\theta$ of the cumulative distribution function 
$F (t \st \theta)$ of $\pr_T^\theta$.
In the following it is furthermore assumed
that $\theta \mapsto F(t \st \theta)$ is
absolutely continuous and onto $(0,1)$.
The relation $u = \hat{u} (t, \theta) = F (t \st \theta)$ can be
inverted to give $\theta = \hat{\theta} (u, t)$ and
$t = \tau (u,\theta) = F^{-1} (u \st \theta)$.
The well known inversion method gives that
$\{U, \tau \}$ with $\pr_U^\theta$ the uniform law on $(0,1)$
is a fiducial model for $\pr_T^\theta$.
This is the Fisher fiducial model,
and it is a simple conventional fiducial model.
%and pivotal.\label{pFishFid}
It can be shown that the corresponding Fisher fiducial distribution coincides with
the fiducial distribution of the original fiducial model \citep{DawidStone82}.
Fiducial inference is hence unique in this case.
If the cumulative distribution $F(t \st \theta)$ is decreasing in $\theta$,
then $1 - F(t \st \theta)$ is the cumulative fiducial distribution.

% This will be assumed in the following.
% Many other cases can be reduced to this case,
% but this will not be discussed further here.

The result of Lindley is
that a fiducial posterior is obtained
if and only if the fiducial model $(U, \tau)$ is a transformation
of a fiducial model $(V, \eta)$ where
$\eta (v, \varsigma) = \varsigma + v$.
The prior for $\varsigma$ is Lebesgue measure on $\RealN$ and the
resulting fiducial model $(V, \eta)$ is the location model.
The transformation assumption is that 
$\tau (u,\theta) = \phi_3 (\phi_1 (u) + \phi_2 (\theta))$ with 
$v = \phi_1 (u)$ and $\varsigma = \phi_2 (\theta)$.

The if part of the claim is a special case of the results
discussed in Section~\ref{sLocation} since both Bayesian and fiducial
inference behave consistently under transformations.
The if part does not require existence of densities,
and this result here is then an extension of the results of Lindley.

An example which is more general than the Lindley case is obtained by choosing a
$\phi_3$ which is strictly increasing, but nowhere differentiable.
The result is then a singular continuous fiducial posterior,
and this is not covered by the proof of Lindley.
Another class of examples is given by choosing an
arbitrary probability distribution $\pr_U^\theta$ which does not need to be 
absolutely continuous.
A third class of examples not covered by Lindley is given by countable
$\Omega_U = \Omega_\Theta = \Omega_T \subset \RealN$ 
equipped with a possibly non-commutative group or loop operation.

It remains to prove the only if part of the fiducial posterior claim
given the above restrictions on the cumulative distribution.
The necessary parts of the argument of Lindley is reproduced next.

Assume that the fiducial model has a fiducial posterior
in the sense that the fiducial density
$-\partial_\theta F (t \st \theta)$ 
equals the posterior density 
$\partial_t F (t \st \theta) h(\theta) g(t)^{-1}$.
This gives the following generalization of the one-way wave equation
\begin{equation}
\label{eqWave}
-h(\theta)^{-1}\partial_\theta F (t \st \theta)=
g(t)^{-1} \partial_t F (t \st \theta) 
\end{equation}
A general solution is given by 
$F = S(G(t)-H(\theta))$, where
$G'=g$ and $H'=h$.
Consequently, the family of conditional distributions for $G(T)$ is 
a location family with location parameter $H(\theta)$.
The one-one correspondence 
$G(T) \mapsto G^{-1} (G(T)) = T$ proves that $T$
is given by a transformation of the location group model.

A particularly nice aspect of the above proof % of \citet{Lindley58FiducialBayesEq}
is that it gives explicitly the required transformation to a
standard location model.
The function $G$ is the cumulative distribution 
of the (marginal) law of $T$, 
and a fiducial posterior is obtained if and only if
the variable $G(T)$ corresponds to a standard location model.

It is also the explicit transformation that ensures that
the law of $H(\Theta)$ is the uniform law on the real line:
\begin{equation}
\pr (a < H(\Theta) < b) = \int_{H^{-1} (a)}^{H^{-1} (b)} h(\theta)\,d\theta = b-a
\end{equation}
A particular consequence is that the prior law of $\Theta$ is always
improper when the posterior coincides with the fiducial.

\subsection{The correlation coefficient}
\label{ssCorr}

Let $F (r \st \rho)$ 
be the cumulative distribution function of
the empirical correlation coefficient $r$
of a random sample from the bivariate normal distribution.
The parameter $\theta=\rho$ is the correlation coefficient.
A fiducial model $(U,\chi)$ is given by
a uniform law $\pr_U^\theta$ and the
fiducial relation $\chi (u,\theta) = F^{-1} (u \st \theta)$.
It is possible to sample from the fiducial based on $F$,
but a much simpler algorithm is described in section~\ref{ssMNormal}
below.

In this case it is known that
there exists no prior on $\rho$ 
that gives the fiducial as a posterior $\pr_\rho^r$ 
\citep[p.966]{BergerSun08}.
The proof is not trivial.
The fiducial for the correlation coefficient
gives the very first example \citep{Fisher30} of a derivation
of a fiducial distribution \citep[p.176]{FISHER}.
The fiducial for the correlation coefficient is, 
however, 
a Bayesian posterior from the multivariate normal model
considered in section~\ref{ssMNormal}.

\subsection{Gamma distribution II}
\label{ssGammaII}

Consider a random sample from the
gamma density
\begin{equation}
f_{X_i} (x_i \st \theta) = 
\{\beta^\theta \Gamma (\theta)\}^{-1}
\; {x_i}^{\theta - 1} e^{-x_i/\beta}
,\;\;
\text{shape } \theta > 0, \text{scale } \beta > 0
\end{equation}  
The case with a general scale $\beta$
can be reduced to the case of a scale $\beta=1$
by consideration of $x_i/\beta$.
It will hence initially be assumed that $\beta=1$.

The form of the density shows that
$T=\overline{\ln (X)}$ is sufficient.
A fiducial model is given by
$T = \tau (U,\theta) = \overline{ \ln (F^{-1} (U; \theta))}$
where $F^{-1}$ is the inverse CDF of the gamma distribution 
with scale $\beta = 1$ and
$(U_1, \ldots, U_n \st \Theta=\theta) \sim \univar (0,1)$ independent, 
Each $\ln (F^{-1} (u_i; \theta))$ is increasing in $\theta$,
since $F (u \st \theta)$ is increasing.

An alternative fiducial model is given by
$T = G^{-1}(V, \theta)$ where $G(t \st \theta)$ is the
CDF of $T$ and $(V \st \Theta=\theta) \sim \univar(0,1)$.
An explicit expression for $G$ can be given
in terms of the Meijer G-function using results by \citet{Nadarajah11gammaprod}.
Both models are simple, and give the same fiducial distribution.
We conjecture that this fiducial is not obtainable as
a Bayesian posterior, 
but do not attempt a proof.

The fiducial is a confidence distribution for the shape $\theta$,
and both of the previous fiducial models give sampling algorithm.
Reasonable estimators for $\theta$ are given by
$\E^{t,\theta} \Theta^x$ and $\exp \left( \E^{t,\theta} \log(\Theta^x) \right)$
corresponding to a squared distance loss on the direct and logarithmic
scale respectively.
Alternatives are given by the Fisher information metric or an entropy distance.
Natural competitors are the maximum-likelihood and the Jeffreys prior Bayesian
versions of the previous fiducial estimators.
A detailed discussion of this will not be give here.

% \subsection{The truncated exponential}

\section{Group and loop models}
\label{sGroup}
%\label{ssFraser}

It will next be explained, as promised in the abstract, 
that Fraser's result on fiducial posteriors 
follows as a special case of Theorem~\ref{TheoFidPost}.
%It is assumed throughout this section that
%$\pr_U^\theta$ does not depend on $\theta$. 

\subsection{A generalized location-scale model}
\label{ssLocScale}

Let $\chi$ be defined by
\begin{equation}
\label{eqFidNorm}
x_i = \chi_i (u, \theta) = \theta u_i = [\mu, L] u_i = \mu + L u_i,
\;\; i=1,\ldots,n
\end{equation}
where $\mu, u_i \in \RealN^p$ are columns of length $p$ and 
$L$ is a lower triangular $p \times p$
matrix with positive diagonal.
The case $p=1$ gives the standard location-scale model
$x_i = \mu + \sigma u_i$ with $\sigma=L$,
and equation~(\ref{eqFidNorm}) can be seen as a natural generalization.

The generalized location-scale group $G = \Omega_\Theta$ with elements
$\theta = (\mu, L)$ is discussed in more detail by
\citet[p.175]{Fraser79inferenceAndLinear}
in the context of structural inference.
Multiplication is defined by
$[\mu_1, L_1] [\mu_2, L_2] = [\mu_1 + L_1 \mu_2, L_1 L_2]$,
the inverse is $[\mu, l]^{-1} = [-l^{-1} \mu, l^{-1}]$,
and the identity is $e = [0, I]$.
The group may also be identified with the group of lower triangular matrices
on the $2\times 2$ block form 
\begin{equation}
\label{eqGroupNorm}
g = 
\begin{pmatrix}
L & 0_p\\
\mu^* & 1
\end{pmatrix}
\end{equation}
which gives the previous calculation rules from matrix multiplication directly.

A Monte Carlo law $\pr_U^\theta$ gives that
$(U, \chi)$ is a fiducial model for the
conditional law $\pr_X^\theta$ of $X = \chi (U, \Theta)$.
It will be assumed that the $U_i$ are independent
and corresponds to a random sample of size $n$ from
a  known probability distribution on $V=\RealN^p$.
The columns $X_i = \mu + L U_i$
corresponds then also to a random sample of size $n$
from a distribution on $V$ with
$\E^\theta (X_i) = \mu$ and
$\Cov^\theta X_i = \E^\theta (X_i-\mu)\,(X_i-\mu)^* = L\,L^* = \Sigma$,
where it is assumed that $\Cov^\theta U_i = I$ 
and $\E^\theta U_i = 0$.

The result so far is a fiducial model where the model parameter space
corresponds to the mean $\mu$ and covariance $\Sigma$ of
some multivariate law on $V=\RealN^p$.
It should be observed that the Cholesky decomposition
$\Sigma = L\,L^*$ determines $L$ uniquely,
and it is hence a matter of choice if $L$ or $\Sigma$ is considered
as a model parameter.

The model given by equation~(\ref{eqFidNorm}) 
is not a simple fiducial model,
but it can be reduced to a simple fiducial model by
conditioning similarly to how the location model was treated.
The general recipe for this is explained by
\cite{TaraldsenLindqvist13fidopt},
but the details of this will not be give here.

\subsection{The multivariate normal}
\label{ssMNormal}

The possibly most important group model is 
given from the previous discussion and
assuming that 
$\pr_U^\theta$ is the law of a $p \times n$ matrix of
independent standard normal variables.
The result is then a fiducial model $(U, \chi)$ corresponding to a
random sample of size $n$ from the multivariate
normal $\Normal_p (\mu, \Sigma)$.
This is not the only possible fiducial model for this case,
but other possibilities will not be discussed.

A simple fiducial model is then obtained from the
sufficient statistic $T=(\overline{X}, L_x)$
where $L_x L_x^*$ is the Cholesky decomposition of the 
empirical covariance matrix of $X$.
The fiducial model $(U, \chi)$ from equation~(\ref{eqFidNorm})
gives then a fiducial model $(V, \tau)$
for $\pr_T^\theta$ where
\begin{equation}
\label{eqFidSNorm}
t = \tau (u, \theta) = \theta v,\;\; t,\theta,v \in G
\end{equation}
and $v = [\overline{u}, L_u]$.
This model is simple,
and the fiducial as given by
Definition~\ref{defFidMod}
is the law of
$\Theta^t = t V^{-1}$.

Let $\pr_\Theta$ be the right Haar prior on $G$.
The explicit form for this is not needed
in the following argument.
The right invariance gives $\tau (v, \Theta) = \Theta v \sim \Theta$,
and Theorem~\ref{TheoFidPost} gives
that the Bayesian posterior coincides with
the fiducial.

Sampling from the posterior can be done by alternative methods,
but it seems that the algorithm that follows from the fiducial
argument is the simplest possible that
generate independent samples.
It involves only standard matrix calculations including solving lower triangular
linear systems, 
and calculation of Cholesky decompositions.
This gives in particular a simple sampling algorithm for the
fiducial distribution of the correlation coefficient
considered in section~\ref{ssCorr}.

\subsection{General group case}
\label{ssGenGroup}

Assume that $t = \tau (v, \theta) = \theta v$
is given by group multiplication.
Let $\pr_\Theta$ be a right Haar measure on the group
$G = \Omega_\Theta = \Omega_V = \Omega_T$.
It follows from the right invariance that 
$\tau (u, \Theta) = \Theta v \sim \pr_\Theta$ for all $v$.
Theorem~\ref{TheoFidPost} can now be applied,
and it follows that the distribution of
$tV^{-1}$ conditional on $\Theta = \theta$ equals both the fiducial and the posterior.
This case can be referred to as the {\em fiducial group model} case \label{pFidGroup}. 
The fiducial model for $T$ is pivotal,
and the right Haar distribution is a matching prior:
The posterior is fiducial and also a confidence distribution
since $v = \theta^{-1} t$ gives a pivotal quantity.
This result is the result obtained by 
\citet{Fraser61FiducialInvariance},
but he obtained it by a different argument.

The previous group case is important since it gives
a multitude of non-trivial examples where the assumptions
in Theorem~\ref{TheoFidPost} are fulfilled.
It is in particular noteworthy that the required 
$\sigma$-finiteness of $T$ and $(U,T)$ follows as consequences
in the fiducial group model.  

Existence of a $\sigma$-finite random quantity $\Theta$ such that
the distribution $\tau (u,\Theta)$ does not depend on $u$
is a non-trivial problem in general.
It is a generalization of the existence and uniqueness problem 
for Haar measure on a group.
This is ensured in the fiducial group case if it is assumed that $G$ is a
locally compact group \citep{HALMOS}.

A more general family of examples can be constructed as follows.
Let 
$\phi_3: G \into \Omega_T$,
$\phi_2: \Omega_\Theta \into G$,
$\phi_1: \Omega_U \into G$,
and 
$\tau(u,\theta) = \phi_3(\phi_2 (\theta) \phi_1 (u))$
where the product is the group multiplication in 
a group $G$ equipped with a right Haar measure $\mu$.
Assume that $\phi_2$ is such that
$\phi_2 (\Theta) \sim \mu$.
It follows then that $\tau(u,\Theta) \sim \tau (U,\Theta) \sim \mu_{\phi_3}$.
If $\phi_3$ and $\phi_2$ are invertible, 
then the fiducial model is simple and the
fiducial posterior is distributed like
$\phi_2^{-1} (\phi_3^{-1}(t) (\phi_1 (U))^{-1})$ 
conditional on $\Theta = \theta$.
%with $U \sim \pr_U^\theta$.
It can be observed that the functions 
$\phi_2$ and $\phi_3$
can be used
to identify $G$ and $\Omega_\Theta$ 
respectively 
$\phi_3 (G) \subset \Omega_T$
both as sets and as groups.
The previous model is hence essentially reducible to the
group case by a change of variables.

The reduction can alternatively be formulated as follows.
Define a new parameter $\varsigma = \phi_2 (\theta)$,
a new variable $V = \phi_1 (U)$,
and let $\eta (v, \varsigma) = \varsigma v$.
It follows that $(V, \eta)$ is a fiducial group model for 
$S = \phi_3^{-1} (T)$, 
and inference can be based on this.
The result is the same as in the previous paragraph.
It can not be concluded that the original
fiducial model $(U, \tau)$ is a group model,
but the model is transformed into a fiducial 
group model $(V, \eta)$. \label{pLoopGroupMod}

Let $G$ be a group with an invariant measure,
and let $\tau (u,\theta) = \phi_3 (\phi_2 (\theta) \phi_1 (u))$
with $\phi_i$ one-one on $G$.
This is a special case of the case considered in the previous two paragraphs. 
This defines a binary operation on $G$ which 
need not be a group since the associative law may fail.
An example is given by $\tau (u,\theta) = (u + \theta)/2$ with addition on
the real line $G=\RealN$.
It is however a quasi-group \citep{Smith06loop}, 
but in the context here it is
essentially reduced to the group case by 
relabeling as explained in the previous paragraphs.
% The loop is then isotopic to a group,
% and there exist general theorems that 
% characterize the loops that are isotopic to a group

\subsection{Loop models}

A {\em quasigroup} $(G,\circ)$ is a set $G$ equipped with a binary
operation $\circ$ such that for each $a, b \in G$, 
there exist unique elements $x,y \in G$ such that
$a\circ x = b$ and $y \circ a = b$.
A {\em loop} $(G,\circ, e)$ is a quasigroup with an identity element,
that is, an element $e$ such that $a \circ e = a = e \circ a$ for all
$a \in G$.
A {\em group} $(G,\circ, e)$ is a loop so that the associative law
$(a \circ b) \circ c = a \circ (b \circ c)$ holds for all
$a,b,c \in G$.
The concept of a loop within abstract algebra as just defined is 
probably less familiar to most readers than the concept of a group.
On an intuitive level it can be considered to be 
an object similar to a group,
but without the associative law.
It will next be explained that loops occur naturally
in the context of fiducial theory.

Consider the case $t = \tau (u, \theta)$ where 
$\tau$ is a bijection separately in both arguments.
The fiducial model is then said to be a  
{\em pivotal} and {\em simple} model.
The bijections defined by $\tau$
can be used to define a change of variables so
that it may be assumed that $\Omega_T = \Omega_U = \Omega_\Theta = G$.
The result is a set $G$ equipped with a binary operation
$\tau (u, \theta)$ with inverse $\hat{\theta} (u,t)$ and
inverse $\hat{u} (t,\theta)$. 
$G$ is then a quasi-group.
The notation $\tau (u, \theta) = \theta u$,
$\hat{\theta} (u,t) = t/u$, 
and $\hat{u} (t,\theta) = \theta \setminus t$ with right and left
division is standard.
The change of variables can also be chosen so that there is an
identity element $e$ such that
$g e = e g = g$ for all $g$.
$G$ is then a loop.
The conclusion is that a change of variables reduces a pivotal simple
model to a loop model.\label{pLoop}
% A loop can be characterized intuitively as a group without the associative law.   

Two examples of loops which 
do not seem to be essentially 
reduced to the group case are given next. 
One example is given by
$\tau (u, \theta) = (\theta u^2 \theta)^{\frac{1}{2}}$
where $u$ and $\theta$ are positive definite matrices.
This gives an example of a Bruck loop.
Another example is $\tau (u, \theta) = \theta u$ where
the multiplication is the multiplication of the invertible octonions,
which is a Moufang loop. 
It is not known to the authors if there exist invariant 
loop measures for certain classes of loops, or for these concrete examples.
This would provide examples beyond the group case.  

The finite and countable loop cases are trivial in 
that counting measure
is the unique invariant measure,
but they can otherwise be quite exotic objects.
They do, however, provide examples where the fiducial equals the
Bayesian posterior,
and this does not follow from the results of
Lindley and Fraser.

The one-dimensional case considered in Section~\ref{ssCorr} provides in particular an
example of a loop on the real line that does not possess an invariant measure.
This could be of independent interest, and is hence stated separately
here as a Theorem.
\begin{theo}
\label{TheoNoInvariant}
There exist a smooth loop where an invariant right measure does not exist.
\end{theo}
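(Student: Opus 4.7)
The plan is to exploit the correlation coefficient example from Section~\ref{ssCorr}. Let $F(r \st \rho)$ be the CDF of the empirical correlation coefficient from a bivariate normal sample of size $n$, and consider the fiducial relation $\tau(u,\rho) = F^{-1}(u \st \rho)$ on $(u,\rho) \in (0,1) \times (-1,1)$. Because $F$ is smooth in both $r$ and $\rho$ and strictly monotone in each argument, the fiducial model $(U,\tau)$ is pivotal and simple in the sense of page~\pageref{pLoop}. The reduction described there then provides smooth bijections identifying $\Omega_U$, $\Omega_\Theta$ and $\Omega_T$ with a common one-dimensional manifold $G$, carrying a smooth binary operation $\circ$ together with an identity $e$ and two-sided division, i.e.\ a smooth loop structure.

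The next step is a contradiction argument. Suppose that $(G,\circ,e)$ possessed a right invariant $\sigma$-finite measure $\mu$, so that $\Theta \circ v \sim \mu$ for all $v$ whenever $\Theta \sim \mu$. Transporting $\mu$ back through the smooth change of variables yields a $\sigma$-finite prior $\pr_\Theta$ for the correlation coefficient with the property that the distribution of $\tau(u,\Theta)$ is independent of $u$. Theorem~\ref{TheoFidPost} is then directly applicable and forces the Bayesian posterior $\pr_\rho^r$ for $\rho$ to coincide with the fiducial distribution of the empirical correlation coefficient.

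The last step is to invoke the theorem of \citet[p.\,966]{BergerSun08} quoted in Section~\ref{ssCorr}, which asserts that no prior on $\rho$ renders the fiducial distribution a Bayesian posterior. Combined with the previous paragraph this is a contradiction, so no right invariant measure can exist on $(G,\circ,e)$, and the loop constructed above witnesses the theorem. The main obstacle is the first step: one must verify that the bijections turning $\tau$ into a loop multiplication are globally smooth on the one-dimensional manifold and respect an identity, which reduces to smoothness and strict bi-monotonicity of $F(r \st \rho)$ and its inverse. Once this is in hand, the translation between right invariant loop measures and priors satisfying the hypothesis of Theorem~\ref{TheoFidPost} is routine, and the non-existence result of Berger--Sun closes the argument.
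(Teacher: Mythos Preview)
Your proof is correct and follows essentially the same route as the paper: invoke the correlation coefficient example of Section~\ref{ssCorr}, observe that the resulting pivotal simple fiducial model yields a smooth one-dimensional loop, and argue by contradiction that a right invariant measure would, via Theorem~\ref{TheoFidPost}, make the fiducial a Bayesian posterior contrary to the Berger--Sun result. Your write-up is in fact more explicit than the paper's, spelling out the change-of-variables and smoothness issues that the paper's three-line argument leaves implicit.
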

\begin{proof}
This follows from the correlation coefficient example in
section~\ref{ssCorr} which provides a loop model where the fiducial
can not be a Bayesian posterior.
An invariant right measure would provide a Bayesian posterior
as a special case of Theorem~\ref{TheoFidPost}.
\end{proof}

The result of Lindley can be reformulated to give an alternative
characterization of loops with an invariant measure:
A smooth loop on the real line has an invariant measure if and only if it 
can be reduced to a group by a change of variables. 
It is unclear if this can be generalized to more general loops.
The term {\em smooth} is here interpreted to mean infinitely
differentiable with continuous derivatives.

\vekk{
\subsection{Gamma distribution III}
\label{ssGammaIII}

2 d example of loop not reducible to a group.
}

\section{A fundamental lemma}
\label{sLemma}

The following Lemma has Theorems~\ref{TheoFidPost}-\ref{TheoPost}
as direct consequences.
\begin{lemma}
\label{Lemma1}
Assume that $\Theta$ is $\sigma$-finite with
$(U \st \Theta = \theta) \sim f(u) \nu (du)$ and
$\tau (u, \Theta) \sim w (t, u) \mu (dt)$ 
for fixed $u$, where $\nu$ and $\mu$ are $\sigma$-finite measures
and $w$ is jointly measurable.
Let $T=\tau (U,\Theta)$.
It follows then that
\begin{equation}
%h(u,t) = f (u \st \hat{\theta} (u, t)) w (t, u) 
(U,T) \sim  f (u) w (t, u)\,  \nu (du) \mu (dt)
\end{equation}
\end{lemma}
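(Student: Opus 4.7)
The plan is to identify the joint law of $(U,T)$ by computing $\E \phi(U,T)$ for an arbitrary nonnegative measurable test function $\phi$ on $\Omega_U \times \Omega_T$, and then read off the density from the resulting double integral. Nonnegativity of $\phi$ lets me rely on Tonelli throughout rather than Fubini, so no prior integrability is needed.

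First, I would condition on $\Theta$ and apply the first hypothesis. By the tower property and the fact that $T = \tau(U,\Theta)$,
\begin{equation*}
\E \phi(U,T) = \int \E\bigl( \phi(U, \tau(U,\theta)) \st \Theta = \theta \bigr)\, \pr_\Theta(d\theta),
\end{equation*}
and since $(U \st \Theta = \theta) \sim f(u)\,\nu(du)$ the inner conditional expectation is $\int \phi(u,\tau(u,\theta))\, f(u)\,\nu(du)$. Next I would swap the order of integration. This is exactly where every $\sigma$-finiteness assumption is used: $\pr_\Theta$ is $\sigma$-finite since $\Theta$ is $\sigma$-finite, $\nu$ is $\sigma$-finite by hypothesis, and the integrand is nonnegative and jointly measurable in $(u,\theta)$ because $\tau$, $\phi$, and $f$ are measurable. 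After the swap the inner integral equals $\E \phi(u, \tau(u, \Theta))$ for each fixed $u$; by the second hypothesis this equals $\int \phi(u,t)\, w(t,u)\, \mu(dt)$. Substituting back and applying Tonelli once more,
\begin{equation*}
\E \phi(U,T) = \int \int \phi(u,t)\, f(u)\, w(t,u)\, \nu(du)\, \mu(dt),
\end{equation*}
which identifies the law of $(U,T)$ as $f(u)\, w(t,u)\, \nu(du)\,\mu(dt)$, as claimed.

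The main obstacle I expect is the measurability bookkeeping needed to glue the two conditional-law hypotheses together. The second hypothesis $\tau(u,\Theta) \sim w(t,u)\,\mu(dt)$ is phrased pointwise in $u$, so to pull it under a $\nu$-integral one must know the resulting density varies measurably in $u$; the assumption that $w$ is jointly measurable is precisely what supplies this. Similarly, inserting $\pr_U^\theta(du) = f(u)\nu(du)$ inside the outer $\pr_\Theta$-integral tacitly uses a regular version of the conditional law, which is available in the $\sigma$-finite framework of Section~\ref{sFidPost} by the extension of the Radon--Nikodym argument discussed there. Once these measurability details are in place, the proof is a clean application of the tower property and two uses of Tonelli, and Theorems~\ref{TheoFidPost} and \ref{TheoPost} follow by marginalization and Bayes-rule manipulations of the identity just established.
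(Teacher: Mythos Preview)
Your proof is correct and matches the paper's own argument essentially step for step: write $\E\phi(U,T)$ as a double integral in $(u,\theta)$ via the conditional density $f(u)\nu(du)$, interchange the order of integration, and then push $\theta$ forward to $t=\tau(u,\theta)$ for fixed $u$ using the hypothesis $\tau(u,\Theta)\sim w(t,u)\mu(dt)$. The paper is simply terser about the Tonelli/measurability bookkeeping that you spell out.
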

\begin{proof}
Change variables from $(U, \Theta)$ to $(U, T)$
%using $\theta = \hat{\theta} (u, \tau (u, \theta))$:
%
\begin{equation*}
\begin{split}
\E \phi(U,T) & =
\iint \phi (u, \tau(u,\theta)) f(u) \, \nu (du) \pr_\Theta (d\theta)\\
& =
\iint \phi (u, t) f(u) w(t, u)
\, \mu (dt) \, \nu (du)
\end{split}
\end{equation*}

\end{proof}

The key ingredients in the above proof are the Fubini theorem together
with the general change-of-variables theorem 
$\int \psi (z) \pr_Z (dz) = \int \psi (\phi(y)) \pr_Y (dy)$
when $Z = \phi (Y)$.
This theorem is usually proved in the context
of probability spaces \citep[p.163, Theorem C]{HALMOS},
but the proof is also valid for the more general case where $\pr$ is
assumed to be $\sigma$-finite.
%The proof actually holds also without the assumption of

The main point of Lemma~\ref{Lemma1} is that it provides an explicit expression
for the density $h$ of $(U,T)$,
and it follows in particular that this density exists.
It follows from the proof
that $(U,T)$ is $\sigma$-finite,
and that $T$ is $\sigma$-finite if and only if
$\int h(u,t)\; \nu (du) < \infty$ for $\mu$-a.e. $t$.
This condition can be checked in applications.

\begin{proof}
{\bf Theorem~\ref{TheoFidPost}}
The assumption gives $\tau (u, \Theta) \sim \mu (dt)$ for
a $\sigma$-finite measure $\mu$.
The $\sigma$-finiteness follows since
$\theta \mapsto \tau(u,\theta)$ is a bijection. 
Lemma~\ref{Lemma1} gives that 
$h (u,t) = f(u)$ is the joint density of $(U,T)$,
and then also that $T \sim \mu (dt)$ is $\sigma$-finite.
A sample $\theta$ from
$\Theta \st T=t$ can generally be obtained
by sampling $u$ from $(U \st T=t)$ followed
by sampling $\theta$ from
$(\Theta \st T=t, U=u)$.
The result is identical with the result $\theta^t (u)$ 
from the fiducial as defined in Definition~\ref{defFidMod}
since $h(u,t) = f(u)$ is the density of $(U \st T=t)$.
\end{proof}

\begin{proof}
{\bf Theorem~\ref{TheoPost}}
The proof is as the previous,
but the density of $(U \st T=t)$
is now given by a density
proportional to $h (u,t) =  f (u) w (t, u)$.
The required normalization is possible since it is assumed that
$T$ is $\sigma$-finite. 
\end{proof}

\section{Closing remarks}
\label{sClosing}

As explained in the introduction it is important
to establish cases where the fiducial equals a Bayesian posterior,
and also the cases where the fiducial is not a Bayesian posterior.
In general and special cases this is a difficult task.
Theorem~\ref{TheoFidPost} shows that
existence of a law $\pr_\Theta$ such that $\tau (u,\Theta)$
has a law that does not depend on $u$ implies
that the fiducial equals the resulting Bayesian posterior.
The proof of existence of an invariant law $\pr_\Theta$ is difficult,
but in the case of groups the theory is well established.
The results of Fraser on fiducial posteriors follow then as
corollaries of Theorem~\ref{TheoFidPost}
as explained in section~\ref{ssGenGroup}.

The correlation coefficient case gives an example where
the fiducial is not equal to a Bayesian posterior  
from the statistical model for the empirical correlation coefficient.
We believe that the gamma with known scale, and the gamma where both
scale and shape are unknown give two more examples
where the fiducial is not a Bayesian posterior,
but we do not have a proof of this.
 
The use of sufficient statistics for the gamma model gives
examples of respectively a one- and a two-dimensional loop model.
The question of existence of invariant measures for quasi-groups or
loops has here been shown to be related to the question of
fiducial posteriors.
Unfortunately, it seems that the question of existence of invariant
measures for quasi-groups is an open and difficult question.
A byproduct of the discussion given here is
Theorem~\ref{TheoNoInvariant} that shows existence of 
a smooth loop where an invariant right measure does not exist.

Theorem~\ref{TheoPost} has a more direct application.
It gives an alternative algorithm for Bayesian posterior
sampling based on a fiducial model.

%\bibliography{bib}
\bibliography{bib,gtaralds}
%\bibliography{gtaralds}
%\bibliographystyle{asa}
%\bibliography{bib}
%\bibliographystyle{chicago} 
%\bibliography{gtaralds} 
% bibinputs! men virker ikke ??? c:/ d:/ problem ??
% Virket etter restart?
%\bibliographystyle{plain}
\bibliographystyle{plainnat} 

\end{document}